\theoremstyle{theorem}
\newtheorem*{theorem}{Theorem}
\theoremstyle{lemma}
\newtheorem{lemma}{Lemma}
\theoremstyle{proposition}
\theoremstyle{corollary}
\newtheorem*{corollary}{Corollary}
\theoremstyle{remark}
\title{Generalisations of Singular Value Decomposition to dual-numbered matrices}
\author{Ran Gutin \\ Department of Computer Science \\ Imperial College London}
\date{\today}
\begin{document}

\maketitle

\begin{abstract}
  We present two generalisations of Singular Value Decomposition from real-numbered matrices to dual-numbered matrices. We prove that every dual-numbered matrix has both types of SVD. Both of our generalisations are motivated by applications, either to geometry or to mechanics.

  \textbf{Keywords}: singular value decomposition; linear algebra; dual numbers
\end{abstract}

\hypertarget{introduction}{%
\section{Introduction}\label{introduction}}

In this paper, we consider two possible generalisations of Singular Value Decomposition (\emph{$T$-SVD} and \emph{$*$-SVD}) to matrices over the ring of dual numbers. We prove that both generalisations always exist. Both types of SVD are motivated by applications.

A \emph{dual number} is a number of the form $a+b\epsilon$ where $a, b \in \mathbb R$ and $\epsilon^2 = 0$. The dual numbers form a commutative, assocative and unital algebra over the real numbers. Let $\mathbb D$ denote the dual numbers, and $M_n(\mathbb D)$ denote the ring of $n \times n$ dual-numbered matrices.

Dual numbers have applications in automatic differentiation \cite{10.1007/s11075-015-0067-6}, mechanics (via screw theory, see \cite{Fischer1998DualNumberMI}), computer graphics (via the \emph{dual quaternion} algebra \cite{Kenwright_abeginners}), and geometry \cite{1968}.
Our paper is motivated by the applications in geometry described in Section \ref{sec:Ya} and in mechanics given in Section \ref{sec:mech}.

Our main results are the existence of the \emph{$T$-SVD} and the existence of the \emph{$*$-SVD}. $T$-SVD is a generalisation of Singular Value Decomposition that resembles that over real numbers, while $*$-SVD is a generalisation of SVD that resembles that over complex numbers. (Note that in earlier editions of this paper, we called the $T$-SVD the R-SVD, and the $*$-SVD the C-SVD). We prove the following:

\begin{theorem}[Dual $T$-SVD]
  Given a square dual number matrix \(M \in M_n(\mathbb D)\), we can decompose the matrix as: \[M = U\Sigma V^T\] where \(U^T U = V^T V = I\), and \(\Sigma\) is a diagonal matrix.
\end{theorem}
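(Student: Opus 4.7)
The plan is to write the dual matrices componentwise as $M = A + B\epsilon$, $U = U_0 + U_1\epsilon$, $V = V_0 + V_1\epsilon$, $\Sigma = \Sigma_0 + \Sigma_1\epsilon$ with all $A, B, U_i, V_i, \Sigma_i \in M_n(\mathbb{R})$, and reduce $M = U\Sigma V^T$ together with $U^T U = V^T V = I$ to real-linear-algebra equations. Matching the $1$-part and the $\epsilon$-part yields
\begin{align*}
    A &= U_0 \Sigma_0 V_0^T, \\
    B &= U_1 \Sigma_0 V_0^T + U_0 \Sigma_1 V_0^T + U_0 \Sigma_0 V_1^T,
\end{align*}
together with $U_0, V_0$ real orthogonal, $\Sigma_0, \Sigma_1$ diagonal, and $X := U_0^T U_1$, $Y := V_0^T V_1$ skew-symmetric.

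First I would solve the $1$-part by taking a standard real SVD of $A$, which fixes $U_0, V_0, \Sigma_0$. Conjugating the $\epsilon$-part equation by $U_0^T$ on the left and $V_0$ on the right, and using $V_1^T V_0 = -Y$, reduces it to
\[
    C := U_0^T B V_0 \;=\; X\Sigma_0 - \Sigma_0 Y + \Sigma_1.
\]
Diagonal entries give $(\Sigma_1)_{ii} = C_{ii}$ immediately. For each pair $i<j$, the equations for $C_{ij}$ and $C_{ji}$ form a $2\times 2$ linear system in $(X_{ij}, Y_{ij})$ with determinant $\sigma_j^2 - \sigma_i^2$, so it is uniquely solvable whenever $\sigma_i \neq \sigma_j$.

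The main obstacle is the degenerate case $\sigma_i = \sigma_j$, where the system is rank-deficient. I would address this by exploiting the non-uniqueness of the real SVD: any replacement $U_0 \to U_0 Q_U$, $V_0 \to V_0 Q_V$ with $Q_U \Sigma_0 = \Sigma_0 Q_V$ still solves the $1$-part. On a block $I_\sigma$ of indices sharing a positive singular value $\sigma$, this constraint forces $Q_U = Q_V = Q$, under which $C_{I_\sigma \times I_\sigma}$ transforms as $Q^T C_{I_\sigma \times I_\sigma} Q$. Choosing $Q$ to orthogonally diagonalise the symmetric part of this block (by the spectral theorem) renders its off-diagonal entries antisymmetric, i.e.\ $C_{ij} + C_{ji} = 0$, which is exactly the consistency condition needed for the rank-$1$ system when $\sigma_i = \sigma_j > 0$. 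On the kernel block $I_0$ where $\sigma_i = 0$, the constraint places no restriction on $Q_U, Q_V$ within the block, so I would run a real SVD of $C_{I_0 \times I_0}$ itself; this zeroes out its off-diagonal (required since the system collapses to $0 = C_{ij}$ there) and absorbs the diagonal into $\Sigma_1$.

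After these two adjustments every off-diagonal $2\times 2$ system is either invertible or consistent, so $X, Y, \Sigma_1$ can be constructed; setting $U_1 = U_0 X$, $V_1 = V_0 Y$ then produces dual matrices satisfying $U^T U = V^T V = I$. The main subtlety to verify is that the several flavours of SVD freedom (one $Q$ per repeated-$\sigma$ block, an independent $Q_U, Q_V$ pair on the kernel) act on disjoint index ranges and therefore combine without conflict, and that applying them does not disturb the cross-block equations between distinct singular values (automatic, since those $2\times 2$ systems are invertible regardless of $C$).
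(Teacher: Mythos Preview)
Your argument is correct but follows a genuinely different route from the paper. The paper first proves a dual $T$-spectral theorem for symmetric dual matrices, uses it to get an SVD for invertible $M$ via $M^{T}M$, and then treats a general $M$ by splitting $\mathbb D^{n}$ into the subspace on which $T^{T}T$ acts appreciably and its orthogonal complement, applying the invertible case on the former and a real SVD on the infinitesimal part on the latter. You instead work entirely with the real and infinitesimal components, take a real SVD of $A=\operatorname{st}(M)$, and solve for the $\epsilon$-corrections by the linear system $C = X\Sigma_0 - \Sigma_0 Y + \Sigma_1$; the non-uniqueness of the real SVD on repeated-singular-value blocks is then exploited to force the compatibility conditions (antisymmetry of the off-diagonal of $C$ on positive blocks, diagonality of $C$ on the zero block). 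Your approach is more elementary in that it never needs a spectral theorem over $\mathbb D$, only the real spectral theorem and real SVD, and it is fully constructive; the paper's approach is more conceptual, yields the dual spectral theorem as an independent result, and runs verbatim for the $*$-case by swapping in the skew-symmetric spectral theorem. One small point worth making explicit in your write-up: on a repeated positive block the rank-one system leaves a free parameter in $(X_{ij},Y_{ij})$, so you should state which solution you take (any choice works, e.g.\ $Y_{ij}=0$ and $X_{ij}=C_{ij}/\sigma$).
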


\begin{theorem}[Dual $*$-SVD]
  Every square dual-numbered matrix \(M \in M_n(\mathbb D)\) can be decomposed as \[M = U\Sigma \overline V^T\] where \(U\overline U^T = V\overline V^T = I\), and \(\Sigma\) is a block-diagonal matrix where each block is of one of the forms \(\begin{pmatrix}\sigma_i\end{pmatrix}\), \(\begin{pmatrix}\sigma_i & -\epsilon \sigma'_i \\ \epsilon\sigma_i' & \sigma_i\end{pmatrix}\) or \(\begin{pmatrix} \epsilon\sigma_i'\end{pmatrix}\), where each $\sigma_i$ and $\sigma_i'$ is real, and $\sigma_i' \neq 0$.
\end{theorem}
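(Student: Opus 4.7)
The plan is to reduce to the classical real SVD and then solve for a dual correction. Write $M = A + \epsilon B$ with $A, B \in M_n(\mathbb{R})$, and observe that the condition $U \overline U^T = I$ on $U = U_0 + \epsilon U_1$ is equivalent to $U_0$ being real orthogonal together with $U_1 U_0^T$ being symmetric; setting $U_1 = U_0 P$ this says simply that $P$ is real symmetric, and similarly for $V = V_0 + \epsilon V_0 Q$ with $Q$ symmetric. Substituting $U = U_0(I + \epsilon P)$, $V = V_0(I + \epsilon Q)$, and $\Sigma = \Sigma_0 + \epsilon \Sigma_1$ into $M = U\Sigma \overline V^T$ and separating real and dual parts reduces the whole theorem to the two equations
\[A = U_0 \Sigma_0 V_0^T, \qquad B = U_0\bigl(\Sigma_1 + P\Sigma_0 - \Sigma_0 Q\bigr)V_0^T,\]
where $\Sigma_0$ is real nonnegative diagonal and $\Sigma_1$ is required to have the block form stated in the theorem.

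First I would produce $U_0, \Sigma_0, V_0$ by the classical real SVD of $A$, form $B' = U_0^T B V_0$, and read the second equation as $\Sigma_1 + P\Sigma_0 - \Sigma_0 Q = B'$. Pairing positions $(i,j)$ and $(j,i)$ gives a $2\times 2$ linear system in $P_{ij}, Q_{ij}$ (using symmetry of $P, Q$) with determinant $\sigma_i^2 - \sigma_j^2$. Whenever $\sigma_i \neq \sigma_j$ the corresponding entries of $\Sigma_1$ can be set to zero and the system solved uniquely; for $i = j$ with $\sigma_i > 0$ one sets $(\Sigma_1)_{ii} = 0$ and picks $P_{ii} - Q_{ii}$ accordingly; and for $i = j$ with $\sigma_i = 0$ one simply puts $(\Sigma_1)_{ii} = B'_{ii}$, which becomes an $(\epsilon \sigma_i')$ block (or a zero $(\sigma_i)$ block if it happens to vanish).

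The interesting cases are the coincidences $\sigma_i = \sigma_j$. When $\sigma_i = \sigma_j = \sigma > 0$, the system forces the symmetric part of $B'$ on that block into $\sigma(P - Q)$ and leaves the antisymmetric part as a residue inside $\Sigma_1$. To make this residue match the prescribed form I use the remaining freedom in the real SVD: within a $k$-fold eigenspace of a positive singular value the choices $U_0 \to U_0 O$, $V_0 \to V_0 O$ by a common orthogonal $O$ preserve $\Sigma_0$, and $O$ can be chosen so that the antisymmetric part of $B'$ on that block is in the canonical block form with $2\times 2$ antisymmetric pieces $\begin{pmatrix}0 & -\sigma'\\ \sigma' & 0\end{pmatrix}$, exactly producing the desired dual blocks $\begin{pmatrix}\sigma & -\epsilon\sigma'\\ \epsilon\sigma' & \sigma\end{pmatrix}$. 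Within the null space of $A$, where $\Sigma_0$ vanishes, the left and right orthogonal freedoms are independent, and I would perform an additional real SVD of the restriction of $B'$ to this subspace to diagonalise it, creating the $(\epsilon \sigma_i')$ and $(0)$ blocks.

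The main obstacle is bookkeeping: the refinements of $U_0, V_0$ for the degenerate positive eigenspaces and for the null space must be carried out before solving the pointwise Sylvester-type system, because they change $B'$ and in turn determine which off-diagonal entries of $\Sigma_1$ are allowed to be nonzero. Once all cases are in place the verification that $M = U\Sigma \overline V^T$ is a routine dual-number expansion, and $U\overline U^T = V\overline V^T = I$ is immediate from $U_0, V_0$ orthogonal and $P, Q$ symmetric.
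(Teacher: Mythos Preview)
Your argument is correct and takes a genuinely different route from the paper. The paper proceeds abstractly: it first proves a $*$-spectral theorem for dual Hermitian matrices, applies it to $M^{*}M$, splits the space into the part $V^{L}$ on which $M$ acts invertibly and the part $V^{R}$ on which $M$ is infinitesimal, and handles the two pieces separately (the invertible piece via a polar-decomposition style argument, the infinitesimal piece via a real SVD of $\epsilon^{-1}M|_{V^{R}}$). Your approach is instead perturbative and coordinate-based: you take the real SVD of the standard part $A$, transfer the problem to a Sylvester-type equation $\Sigma_{1}+P\Sigma_{0}-\Sigma_{0}Q=B'$ with symmetric unknowns $P,Q$, and solve it blockwise, invoking the skew-symmetric normal form on degenerate positive blocks and a real SVD on the null block. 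Both proofs ultimately rest on the same real ingredients (real SVD plus the skew-symmetric spectral theorem), but yours bypasses the dual $*$-spectral theorem entirely and is more directly constructive---it tells you explicitly how to compute $U,V,\Sigma$ from the real SVD of $A$ and the entries of $B$. The paper's route, by contrast, makes the parallel with the classical proof of SVD via the spectral theorem more transparent and reuses machinery it has already built. Your observation that the orthogonal refinements on the degenerate blocks must precede solving the off-diagonal Sylvester system is exactly the right bookkeeping point; once that order is respected, the remaining verifications are routine.
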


\subsection{Dual $*$-SVD}\label{sec:Ya}

Our study of $*$-SVD is motivated by Yaglom's 1968 book \emph{Complex numbers in geometry} (\cite{1968}). Yaglom considers the group of Laguerre transformations, which is analogous to the group of Moebius transformations over the complex numbers.
The Laguerre transformations are the group of functions of the form \(z \mapsto \frac{az + b}{cz + d}\) where \(a, b, c, d\) are elements of \(\mathbb D\), $z$ is a variable over $\mathbb D$, and \(ad-bc\) is not a zero divisor. It can easily be seen that every Laguerre transformation \(z \mapsto \frac{az + b}{cz + d}\) can be represented as the $2 \times 2$ matrix \(\begin{pmatrix}a & b \\ c & d \end{pmatrix}\).

Yaglom classifies the elements of this group in a geometric way. We restate the classification in the language of matrices. Yaglom argues that every invertible $2 \times 2$ matrix over the dual numbers can be expressed in exactly one of the following two ways:

\begin{itemize}
\item
  \(U \Sigma \overline V^T\) where \(U\overline U^T = V\overline V^T = I\) and \(\Sigma\) is a diagonal matrix with real-valued entries.
\item
  \(U \Sigma\) where \(U\overline U^T = I\), \(\Sigma = \begin{pmatrix} \sigma & -\epsilon\sigma' \\ \epsilon \sigma' & \sigma\end{pmatrix}\), and both $\sigma$ and $\sigma'$ are real (and $\sigma'$ is non-zero).
\end{itemize}

The first of these forms resembles Singular Value Decomposition. What we propose in this paper is a generalisation of Singular Value Decomposition to square dual-numbered matrices which includes both forms as special cases.

\subsection{Dual $T$-SVD}\label{sec:mech}

Our study of $T$-SVD is motivated by recent research  \cite{dualmatrix,HAN2018352,Han2018SpectralCM,10.1115/1.4040882} in mechanics, where the authors consider either a form of SVD that is essentially $T$-SVD, or a form of Polar Decomposition that is essentially $T$-Polar
Decomposition. For completeness, we describe $T$-Polar Decomposition below.

\begin{corollary}[Dual $T$-Polar Decomposition]
  Every square dual-numbered matrix $M$ can be expressed in the form $M = UP$ where $U^T U = I$ and $P$ is symmetric.
\end{corollary}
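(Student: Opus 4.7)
The plan is to mimic the standard derivation of Polar Decomposition from Singular Value Decomposition over the reals. Starting from a $T$-SVD $M = U_0 \Sigma V_0^T$ with $U_0^T U_0 = V_0^T V_0 = I$, I insert the identity in the form $V_0^T V_0$ to obtain
\[
M = U_0 \Sigma V_0^T = U_0 V_0^T \cdot V_0 \Sigma V_0^T,
\]
and set $U := U_0 V_0^T$ and $P := V_0 \Sigma V_0^T$. Symmetry of $P$ is immediate: $P^T = (V_0 \Sigma V_0^T)^T = V_0 \Sigma^T V_0^T = V_0 \Sigma V_0^T = P$, since $\Sigma$ is diagonal. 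The content of the proof is then to verify $U^T U = I$.

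Computing, $U^T U = V_0 U_0^T U_0 V_0^T = V_0 V_0^T$, so the entire corollary reduces to the claim that the given relation $V_0^T V_0 = I$ forces $V_0 V_0^T = I$ as well. This is the step I expect to require the most care, since $\mathbb D$ is not a field; however, it is a commutative ring, so the adjugate/determinant argument still applies. From $V_0^T V_0 = I$ we obtain $\det(V_0)^2 = 1$, so $\det(V_0)$ is a unit in $\mathbb D$, and hence $V_0$ is invertible (with inverse given by the adjugate divided by $\det(V_0)$). Once $V_0$ is invertible, $V_0^T V_0 = I$ uniquely identifies $V_0^T$ as $V_0^{-1}$, and therefore $V_0 V_0^T = I$ as well.

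Putting the pieces together, $U = U_0 V_0^T$ and $P = V_0 \Sigma V_0^T$ give the required decomposition $M = UP$ with $U^T U = I$ and $P^T = P$. Because the whole argument is formal and uses only the hypotheses supplied by the Dual $T$-SVD theorem together with determinant theory for commutative rings, no additional machinery beyond what is already in the excerpt is needed. The main conceptual obstacle — verifying that one-sided $T$-orthogonality implies two-sided $T$-orthogonality for square dual-numbered matrices — is precisely the point where dual numbers could have caused trouble, and it is resolved by the observation that units in $\mathbb D$ are exactly the dual numbers with nonzero real part, a condition automatically enforced by $\det(V_0)^2 = 1$.
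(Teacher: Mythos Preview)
Your proof is correct and follows the standard derivation of polar decomposition from SVD, which is exactly what the paper intends by presenting this as a corollary of the Dual $T$-SVD theorem (the paper gives no separate proof). One minor remark: the paper's definition of \emph{$T$-orthogonal} in the Notation section already requires both $U^TU = I$ and $UU^T = I$, and the restatement of the $T$-SVD theorem in Section~\ref{dual-svd} uses this two-sided definition; so your determinant argument for passing from one-sided to two-sided orthogonality, while entirely correct, is not strictly needed once that formulation is invoked.
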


One of the main applications of the $T$-SVD is in finding the Moore-Penrose generalised inverse of a dual matrix (whenever it exists). This has applications in kinematic synthesis (see \cite{10.1115/1.4040882}).

Very recently, Udwadia et al. \cite{pinv} studied the existence of the Moore-Penrose generalised inverse for dual-numbered matrices and showed that unlike in the case of real and complex matrices, not all dual-numbered matrices have such inverses. In contrast to their result, we prove that the $T$-SVD does exist for all dual-numbered matrices. Our result is new as none of the authors above have proved that the $T$-SVD exists in general.

\hypertarget{structure}{%
\subsection{Structure of the paper}\label{structure}}

Section \ref{preliminaries} contains preliminaries.
In section \ref{dual-spectral}, we will prove the $*$/$T$-spectral theorems. In section \ref{dual-svd}, we will show that every square dual-number matrix has a $*$/$T$-SVD.

\section{Preliminaries} \label{preliminaries}

\subsection{Dual numbers}

The dual numbers are the ring $\mathbb R[\epsilon]/(\epsilon^2)$. In other words the dual numbers are pairs of real numbers, usually written as $a + b\epsilon$, with the following operations defined on them:

\begin{itemize}
  \item $(a + b\epsilon) + (c + d\epsilon) = (a + c) + (b + d)\epsilon$.
  \item $(a + b\epsilon)(c + d\epsilon) = ac + (ad + bc)\epsilon$.
  \item $(a + b\epsilon)^{-1} = a^{-1} - \epsilon b a^{-2}$.
\end{itemize}

Given a polynomial with real coefficients $F(X) \in \mathbb R[X]$, evaluating $F$ on a dual number gives $F(a + b\epsilon) = F(a) + \epsilon b F'(a)$ where $F'$ denotes the derivative of $F$. Multiples of the dual number $\epsilon$ are sometimes referred to as ``infinitesimal'' with the intuition being that $\epsilon$ is ``so small'' that it squares to $0$.

The dual numbers contrast with the complex numbers. The complex numbers are defined as the ring $\mathbb R[i]/(i^2 + 1)$. Like the dual numbers, the complex numbers are pairs of real numbers with certain operations defined on them. The difference is that while the complex numbers are defined by adjoining an element $i$ such that $i^2 = -1$, the dual numbers are defined by adjoining an element $\epsilon$ such that $\epsilon^2 = 0$.

Unlike the complex numbers, the dual numbers do not form a field. As such, instead of talking about vector spaces over dual numbers, one must talk about \emph{modules} over dual numbers. Modules over dual numbers don't necessarily have bases: For instance, the module $\epsilon \mathbb D$ (that is, multiples of the dual number $\epsilon$) doesn't have a basis. For this reason, one cannot in general talk about the ``rank'' of a matrix over the dual numbers. A module over the dual numbers (or generally any ring) that does have a basis is referred to as a \emph{free} module.

\subsection{Spectral theorem over symmetric real matrices}\label{ove$T$-symmetri$*$-real-matrices}

The spectral theorem over symmetric matrices states that a symmetric matrix \(A\) over the real numbers can be orthogonally diagonalised. In other words, if \(A\) is a real matrix such that \(A = A^T\), then there exists a matrix \(P\) such that \(P^TP=I\) and \(A = PDP^T\) for some diagonal matrix \(D\).

\subsection{Spectral theorem over skew-symmetric real matrices}\label{ove$T$-skew-symmetri$*$-real-matrices}

The spectral theorem over skew-symmetric matrices states that a skew-symmetric matrix \(A\) over the real number can be orthogonally block-diagonalized, where every block is a skew-symmetric $2 \times 2$ block except possibly for one block, which has dimensions $1\times 1$ and whose only entry equals \(0\).

\hypertarget{a-note-on-notation}{%
\subsection{Notation and terms}\label{a-note-on-notation}}

By \(\overline{a + b\epsilon}\), we mean \(a - b\epsilon\). Given a dual number \(a + b\epsilon\), we call \(a\) the \emph{standard part} and $b$ the \emph{infinitesimal} part. We sometimes denote the standard and infinitesimal parts of a dual number \(z\) by \(\operatorname{st}(z)\) and \(\Im(z)\) respectively. We also use the term \emph{infinitesimal} to describe a dual number whose standard part is zero, and \emph{appreciable} to describe a dual number whose standard part is non-zero. We sometimes call a dual number whose infinitesimal part is zero a real number. All the above terms generalise to matrices and vectors over dual numbers in the obvious way. Likewise, the operations \emph{infinitesimal part} and \emph{standard part} generalise to sets of dual-numbered vectors by applying these operations to each element of the set.

We sometimes write \(M^*\) for \(\overline M^T\).

We use $(u,v)$ to denote $u^T v$, and $\langle u, v \rangle$ to denote $u^T\overline v$. Two vectors are considered \emph{$T$-orthogonal} if $(u,v) = 0$, and \emph{$*$-orthogonal} if $\langle u, v \rangle = 0$.

A dual matrix \(U\) which satisfies \(U\bar U^T = \bar U^T U = I\) is called \emph{unitary}. Similarly, one which satisfies $U^T U = U U^T = I$ is called \emph{$T$-orthogonal}.

A dual matrix \(A\) such that \(A = \bar A^T\) is called \emph{Hermitian}. Similarly, one which satisfies $A = A^T$ is called \emph{symmetric}.

A vector $v$ is called an \emph{eigenvector} of a dual-numbered matrix $A$ if $v$ is appreciable and $Av = \lambda v$ for some $\lambda \in \mathbb D$.

\section{Dual-number $*$/$T$-spectral theorem} \label{dual-spectral}

\begin{theorem}[Dual $*$-spectral]
  Given a Hermitian dual number matrix \(M \in M_n(\mathbb D)\), we can decompose the matrix as: \[M = V\Sigma V^*\] where \(V\) is unitary, and \(\Sigma\) is a block-diagonal matrix where each block is either of the form:
  \begin{itemize}
  \item
    \(\begin{pmatrix}\sigma_i\end{pmatrix}\),
  \item
    or \(\begin{pmatrix}\sigma_i & -\epsilon \sigma'_i \\ \epsilon\sigma_i' & \sigma_i\end{pmatrix}\)
  \end{itemize}
  where each \(\sigma_i\) and \(\sigma_i'\) is real, and $\sigma'_i \neq 0$.
\end{theorem}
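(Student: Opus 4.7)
The idea is to reduce to the two real spectral theorems recalled in the preliminaries by separating $M$ into its standard and infinitesimal parts. Writing $M = A + \epsilon B$ with $A, B$ real, the Hermitian condition $M = M^*$ forces $A$ to be symmetric and $B$ to be skew-symmetric. First I would apply the real symmetric spectral theorem to $A$ to produce a real orthogonal $P$ with $P^T A P = D$ diagonal; since $P$ has real entries, $P\overline{P}^T = PP^T = I$, so $P$ is unitary as a dual-numbered matrix. Conjugating yields
$$P^* M P = D + \epsilon C, \qquad C := P^T B P,$$
with $C$ still real and skew-symmetric.

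Next, grouping coincident eigenvalues of $D$ into blocks $\lambda_1 I_{n_1}, \ldots, \lambda_r I_{n_r}$, I would kill the off-diagonal part of $C$ by an ``infinitesimal'' unitary conjugation $W = I + \epsilon X$ with $X$ real. Because $\epsilon^2 = 0$, one sees $W \overline{W}^T = I + \epsilon(X - X^T)$, so $W$ is unitary precisely when $X$ is symmetric, and in that case a direct expansion gives
$$W^* (D + \epsilon C) W = D + \epsilon(C + DX - XD).$$
Partitioning by the eigenvalue blocks of $D$, the off-diagonal entries of $DX - XD$ are $(\lambda_k - \lambda_l) X_{kl}$, so I can take $X_{kl} = C_{kl}/(\lambda_l - \lambda_k)$ for $k \neq l$ and $X_{kk} = 0$. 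The denominators are non-zero reals (hence invertible in $\mathbb D$), and the skew-symmetry of $C$ forces $X_{lk} = X_{kl}^T$, so this $X$ is indeed symmetric. After conjugating by $W$, the matrix becomes block diagonal with $k$-th block $\lambda_k I_{n_k} + \epsilon C_{kk}$, each $C_{kk}$ real skew-symmetric.

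Finally, I would block-diagonalise each $C_{kk}$ by the real skew-symmetric spectral theorem: there is a real orthogonal $Q_k$ with $Q_k^T C_{kk} Q_k$ a direct sum of $2 \times 2$ blocks $\begin{pmatrix}0 & -\sigma'\\ \sigma' & 0\end{pmatrix}$ and perhaps a single $1 \times 1$ zero block. Conjugating by $\operatorname{diag}(Q_1, \ldots, Q_r)$ then produces exactly the blocks required by the theorem: $1 \times 1$ blocks $(\lambda_k)$ and $2 \times 2$ blocks $\begin{pmatrix}\lambda_k & -\epsilon \sigma' \\ \epsilon \sigma' & \lambda_k\end{pmatrix}$. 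Any $2 \times 2$ block that happens to have $\sigma' = 0$ can be split into two $1 \times 1$ blocks, enforcing $\sigma_i' \neq 0$. The overall unitary is $V = P \cdot W \cdot \operatorname{diag}(Q_1, \ldots, Q_r)$.

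The main technical point, and the only step that is not an immediate invocation of a real theorem, is the middle one: a single conjugation by $I + \epsilon X$ must annihilate the entire off-diagonal infinitesimal coupling in one stroke. This succeeds because $\epsilon^2 = 0$ linearises the conjugation in $X$, and because the scalar divisors $\lambda_l - \lambda_k$ for $k \neq l$ are appreciable (hence invertible in $\mathbb D$). In effect, the infinitesimal character of $\epsilon$ converts what would be a perturbative argument over $\mathbb R$ into an exact computation.
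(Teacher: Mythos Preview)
Your proof is correct and follows essentially the same approach as the paper: diagonalise the real part by a real orthogonal conjugation, then use a unitary of the form $I+\epsilon X$ with $X$ real symmetric (this is exactly the paper's block matrix $P$) to kill the off-diagonal infinitesimal coupling between distinct eigenvalue blocks, and finish by applying the real skew-symmetric spectral theorem to each diagonal block. Your write-up is in fact a bit more explicit than the paper's about why the infinitesimal conjugator is unitary and why the required $X$ comes out symmetric.
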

\begin{proof}
  Let $M$ be a Hermitian matrix. We find a real matrix $S$ such that $S\operatorname{st}(A)S^T$ is diagonal. We let $M'=SMS^T$, which we write as a block matrix
  $$\begin{pmatrix}
    \lambda_1 I + \epsilon B_{11} & \epsilon B_{12} & \dotsb & \epsilon B_{1n} \\
    -\epsilon B^T_{12} & \lambda_2 I + \epsilon B_{22} & \ddots & \vdots \\
    \vdots & \ddots & \ddots & \epsilon B_{n-1,n} \\
    -\epsilon B^T_{1n} & \dotsb & -\epsilon B_{n-1,n}^T & \lambda_n I + \epsilon B_{nn}
  \end{pmatrix}.$$
  where each $B_{ii}$ is skew-symmetric. We let $$P = \begin{pmatrix}
    I & \frac{\epsilon B_{12}}{\lambda_1 - \lambda_2} & \dotsb & \frac{\epsilon B_{1n}}{\lambda_1 - \lambda_n} \\
    \frac{\epsilon B_{12}^T}{\lambda_1 - \lambda_2} & I & \ddots & \vdots \\
    \vdots & \ddots & \ddots & \frac{\epsilon B_{n-1,n}}{\lambda_{n-1} - \lambda_n}\\
    \frac{\epsilon B_{1n}^T}{\lambda_1 - \lambda_n} & \dotsb & \frac{\epsilon B_{n-1,n}^T}{\lambda_{n-1} - \lambda_n} & I
  \end{pmatrix},$$
  and let $M'' = P M' P^*$. We end up with $M''$ being equal to a direct sum of matrices: $M'' = (\lambda_1 I + \epsilon B_{11}) \oplus (\lambda_2 I + \epsilon B_{22}) \oplus \dotsb \oplus (\lambda_n I + \epsilon B_{nn})$. We finally use the spectral theorem for skew-symmetric matrices to find matrices $Q_i$ such that $Q_i B_{ii} Q_i^T$ is equal to a direct sum of skew-symmetric $2\times2$ blocks except for potentially one zero block. We thus get that $(Q_1 \oplus Q_2 \oplus \dotsb \oplus Q_n) M'' (Q_1 \oplus Q_2 \oplus \dotsb \oplus Q_n)^T$ is a block-diagonal matrix in the desired form.
\end{proof}

\begin{theorem}[Dual $T$-spectral]
  Every symmetric matrix can be orthogonally diagonalised.
\end{theorem}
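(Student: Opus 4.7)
The plan is to follow the same three-step recipe as the proof of the dual $*$-spectral theorem above, adapted for the symmetric (rather than Hermitian) setting. The two adaptations are: (i) the off-diagonal-clearing conjugator $P = I + \epsilon Q$ should be $T$-orthogonal rather than unitary, which forces $Q$ to be skew-symmetric rather than symmetric; and (ii) the surviving diagonal blocks $\lambda_i I + \epsilon C_{ii}$ will have $C_{ii}$ real \emph{symmetric} (being the infinitesimal part of a symmetric matrix), so the ordinary real spectral theorem finishes them as genuinely diagonal, with no residual $2 \times 2$ blocks.

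In more detail, write $M = A + \epsilon B$ with $A, B$ real and symmetric. Use the real spectral theorem to find a real orthogonal $S$ with $SAS^T$ diagonal, and group equal eigenvalues so that $SMS^T$ appears in block form with $(i,i)$ block equal to $\lambda_i I + \epsilon C_{ii}$ (each $C_{ii}$ real symmetric) and $(i,j)$ off-diagonal block equal to $\epsilon C_{ij}$, where the $\lambda_i$ are pairwise distinct and $C_{ji} = C_{ij}^T$ (by symmetry of $SBS^T$).

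Now define $Q$ to be the real block matrix with $Q_{ii} = 0$ and $Q_{ij} = C_{ij}/(\lambda_i - \lambda_j)$ for $i \neq j$. The symmetry $C_{ji} = C_{ij}^T$, combined with the antisymmetry of the denominators, yields $Q_{ji} = -Q_{ij}^T$, so $Q$ is skew-symmetric. Hence $P := I + \epsilon Q$ satisfies
\[ P^T P = (I + \epsilon Q^T)(I + \epsilon Q) = I + \epsilon(Q + Q^T) = I, \]
i.e.\ $P$ is $T$-orthogonal. A routine expansion modulo $\epsilon^2$ shows that the $(i,j)$ off-diagonal block of $P(SMS^T)P^T$ equals $\epsilon\bigl(C_{ij} + (\lambda_j - \lambda_i)Q_{ij}\bigr) = 0$, while the diagonal blocks are unchanged at $\lambda_i I + \epsilon C_{ii}$.

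Finally, apply the real spectral theorem to each real symmetric $C_{ii}$ to obtain a real orthogonal $R_i$ with $R_i C_{ii} R_i^T$ diagonal; then $R_i(\lambda_i I + \epsilon C_{ii})R_i^T$ is a diagonal dual matrix. Setting $V^T = (R_1 \oplus \dotsb \oplus R_k)\,P\,S$ gives $V^T V = I$ and $V^T M V = \Sigma$ diagonal, as required. The one step that needs genuine care, and which is the mild analogue of the trickiest step in the $*$-spectral proof, is checking that $Q$ is simultaneously (a) well-defined (which uses the preliminary grouping of equal eigenvalues) and (b) skew-symmetric (which is exactly where the symmetry of $C$ makes the $T$ case slightly cleaner than the $*$ case treated above).
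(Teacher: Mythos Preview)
Your proof is correct and follows essentially the same three-step approach as the paper: diagonalise the standard part, conjugate by a $T$-orthogonal $P = I + \epsilon Q$ (with $Q$ skew-symmetric, built from the off-diagonal blocks divided by eigenvalue differences) to eliminate the off-diagonal infinitesimal blocks, and then apply the real symmetric spectral theorem to each remaining diagonal block. Your write-up is in fact somewhat more explicit than the paper's, which simply records the modified forms of $M'$ and $P$ and notes that the real symmetric (rather than skew-symmetric) spectral theorem is used at the end.
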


\begin{proof}
  The proof is the same as for the $*$-spectral case except that
  $$M' = \begin{pmatrix}
    \lambda_1 I + \epsilon B_{11} & \epsilon B_{12} & \dotsb & \epsilon B_{1n} \\
    \epsilon B^T_{12} & \lambda_2 I + \epsilon B_{22} & \ddots & \vdots \\
    \vdots & \ddots & \ddots & \epsilon B_{n-1,n} \\
    \epsilon B^T_{1n} & \dotsb & \epsilon B_{n-1,n}^T & \lambda_n I + \epsilon B_{nn}
  \end{pmatrix},$$
  and 
  $$P = \begin{pmatrix}
    I & \frac{\epsilon B_{12}}{\lambda_1 - \lambda_2} & \dotsb & \frac{\epsilon B_{1n}}{\lambda_1 - \lambda_n} \\
    -\frac{\epsilon B_{12}^T}{\lambda_1 - \lambda_2} & I & \ddots & \vdots \\
    \vdots & \ddots & \ddots & \frac{\epsilon B_{n-1,n}}{\lambda_{n-1} - \lambda_n}\\
    -\frac{\epsilon B_{1n}^T}{\lambda_1 - \lambda_n} & \dotsb & -\frac{\epsilon B_{n-1,n}^T}{\lambda_{n-1} - \lambda_n} & I
  \end{pmatrix},
    $$
and $M'' = PMP^T$. And instead of using the spectral theorem for skew-symmetric matrices, we use the one for symmetric matrices.
\end{proof}

The following theorem can be used to show that the $*$/$T$-spectral decompositions are unique. Note that in order to show that the $*$-spectral decomposition is unique, one must first fully diagonalise the matrix $\begin{pmatrix}\sigma & -\epsilon \sigma' \\ \epsilon \sigma' & \sigma \end{pmatrix}$ to $\begin{pmatrix} \sigma + i\epsilon\sigma' & 0 \\ 0 & \sigma - i\epsilon\sigma'\end{pmatrix}$ by working over $\mathbb C \otimes \mathbb D$.

\begin{theorem}[Uniqueness of eigenvalues]
  An eigenbasis of a linear endomorphism $T : \mathbb D^n \to \mathbb D^n$ corresponds to a unique multi-set of eigenvalues.
\end{theorem}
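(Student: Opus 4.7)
The plan is to recover the multiset $\{\lambda_1,\ldots,\lambda_n\}$ directly from invariants of $T$, in two stages: first the standard parts, then the infinitesimal parts. The characteristic polynomial $\chi_T(X) = \prod_i (X - \lambda_i)$ is an obvious invariant, but factorisation into linear factors is non-unique over $\mathbb D$ (for instance $X^2 = (X-\epsilon)(X+\epsilon)$), so more geometric input is needed.

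For the standard parts, I would use that the matrix with columns $v_1,\ldots,v_n$ is invertible over $\mathbb D$, so its determinant is a unit and hence $\operatorname{st}(v_1),\ldots,\operatorname{st}(v_n)$ is an $\mathbb R$-basis of $\mathbb R^n$. Taking standard parts of the eigenequations $Tv_i = \lambda_i v_i$ yields $\operatorname{st}(T)\operatorname{st}(v_i) = \operatorname{st}(\lambda_i)\operatorname{st}(v_i)$, so the multiset $\{\operatorname{st}(\lambda_i)\}$ coincides with the eigenvalue multiset of the real matrix $\operatorname{st}(T)$ and therefore depends only on $T$.

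For the infinitesimal parts, fix a root $\alpha$ of $\chi_{\operatorname{st}(T)}$, set $S_\alpha = \{i : \operatorname{st}(\lambda_i) = \alpha\}$, and consider the submodule $W_\alpha := \ker((T - \alpha I)^2)$, which is manifestly an invariant of $T$. Expanding gives $(T-\alpha I)^2 v_i = (\lambda_i - \alpha)^2 v_i$; the scalar $(\lambda_i - \alpha)^2$ vanishes for $i \in S_\alpha$ (since $\lambda_i - \alpha$ is then an $\epsilon$-multiple) and is a unit otherwise, so $W_\alpha = \operatorname{span}_{\mathbb D}\{v_i : i \in S_\alpha\}$, which is free of rank $|S_\alpha|$. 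The restriction $(T - \alpha I)|_{W_\alpha}$ lands inside $\epsilon W_\alpha$, and freeness of $W_\alpha$ yields a canonical $\mathbb R$-isomorphism $W_\alpha / \epsilon W_\alpha \xrightarrow{\sim} \epsilon W_\alpha$ (multiplication by $\epsilon$); composing gives a well-defined $\mathbb R$-endomorphism $N_\alpha$ of $W_\alpha / \epsilon W_\alpha$, which in the basis $\{[v_i] : i \in S_\alpha\}$ is the real diagonal map with entries $\Im(\lambda_i)$. Since $N_\alpha$ is built from $T$ alone, its eigenvalue multiset recovers $\{\Im(\lambda_i) : i \in S_\alpha\}$ as an invariant of $T$, and aggregating across all $\alpha$ determines the full multiset $\{\lambda_i\}$.

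The main obstacle is the construction of the induced map $N_\alpha$ in a basis-free manner: one must argue that $(T-\alpha I)|_{W_\alpha}$ is canonically ``divisible by $\epsilon$'', which in turn requires $W_\alpha = \ker((T-\alpha I)^2)$ to be a free $\mathbb D$-module---a fact that falls out of the computation exhibiting it as the span of part of an eigenbasis. The remaining ingredients (the standard-part argument, and the identification of $W_\alpha$ as this span) are routine once one notices that $(\lambda_i - \alpha)^2$ is either $0$ or a unit in $\mathbb D$.
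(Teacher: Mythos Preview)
Your argument is correct, but it differs substantially from the paper's. The paper proceeds in one step: for each dual number $\lambda$, it shows that the number of basis vectors with eigenvalue exactly $\lambda$ equals $\dim_{\mathbb R}\bigl(\operatorname{st}(V_\lambda)\bigr)$, where $V_\lambda=\{v:Tv=\lambda v\}$ is the full $\mathbb D$-eigenspace. Concretely, if $e_1,\dotsc,e_k$ are the eigenbasis vectors with eigenvalue $\lambda$, the paper checks that $\operatorname{st}(e_1),\dotsc,\operatorname{st}(e_k)$ is an $\mathbb R$-basis of $\operatorname{st}(V_\lambda)$; since this dimension depends only on $T$ and $\lambda$, the multiplicity is invariant. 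By contrast, you split the problem into real and infinitesimal layers: the standard parts are read off from the real matrix $\operatorname{st}(T)$, and the infinitesimal parts within each block are encoded as eigenvalues of the induced real endomorphism $N_\alpha$ on $\ker\bigl((T-\alpha I)^2\bigr)/\epsilon\ker\bigl((T-\alpha I)^2\bigr)$. The paper's route is shorter and needs no quotient machinery; your route exposes more structure (the free generalised eigenspaces $W_\alpha$ and the canonical real operator $N_\alpha$ obtained by ``dividing by $\epsilon$''), and your opening remark about $X^2=(X-\epsilon)(X+\epsilon)$ nicely explains why the characteristic polynomial alone is insufficient.
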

\begin{proof}
  Let $e_1, e_2, \dotsc, e_n$ be a basis made up of eigenvectors of $T$. Let the corresponding eigenvalues be $\lambda_1, \lambda_2, \dotsc, \lambda_n$ respectively.

  We intend to show that for any eigenvalue $\lambda$ of $T$, the number of ``$e_i$''s in our basis with eigenvalue equal to $\lambda$ is equal to $\dim(\operatorname{st}(V_\lambda))$. Since the quantity $\dim(\operatorname{st}(V_\lambda))$ depends only on $T$, it implies that every eigenbasis has the same amount of eigenvectors with eigenvalue equal to $\lambda$. 

  Assume that $e_1, e_2, \dotsc, e_k$ all have some eigenvalue $\lambda$, and no other $e_i$ has eigenvalue $\lambda$. We will show that $k = \dim(\operatorname{st}(V_\lambda))$ by showing that $\operatorname{st}(e_1), \operatorname{st}(e_2), \dotsc, \operatorname{st}(e_k)$ form a basis of $\operatorname{st}(V_\lambda)$.
  
  \paragraph{Claim 1.} $\operatorname{st}(e_1), \operatorname{st}(e_2), \dotsc, \operatorname{st}(e_k)$ form a spanning set of $\operatorname{st}(V_\lambda)$ (the standard part of the eigenspace corresponding to $\lambda$).
  
  \emph{Proof of claim 1.} Given a vector in $\operatorname{st}(V_\lambda)$, we wish to express it as a linear combination of the above vectors. Our vector is equal to $\operatorname{st}(e)$ for some $e \in V_\lambda$. We can express $e$ as $e = \sum_{i=1}^n \alpha_i e_i$. From this we can conclude that $\operatorname{st}(e) = \sum_{i=1}^n \operatorname{st}(\alpha_i) \operatorname{st}(e_i)$. But notice that this sum includes vectors $\operatorname{st}(e_{k+1})$ to $\operatorname{st}(e_n)$, which we don't want in our linear combination. We thus need to show that $\operatorname{st}(\alpha_i) = 0$ for all $i > k$. To this end, notice that $T(e) = \lambda e = \lambda \left(\sum_{i=1}^n \alpha_i e_i\right)$ and $T(e) = \sum_{i=1}^n \alpha_i T(e_i) = \sum_{i=1}^n \alpha_i \lambda_i e_i$. Since we know that the $e_i$ are linearly independent, we have that $\alpha_i(\lambda_i - \lambda) = 0$ for all $i$. From $\alpha_i(\lambda_i - \lambda) = 0$, one can check that if $\lambda \neq \lambda_i$ then $\alpha_i$ is infinitesimal. Since for all $i > k$, we have that $\lambda_i \neq \lambda$, we must have that $\alpha_i$ is infinitesimal for $i > k$. Recalling that $e = \sum_{i=1}^k \alpha_i e_i + \sum_{i=k+1}^n \alpha_i e_i$, taking standard parts of both sides gives $\operatorname{st}(e) = \sum_{i=1}^k \operatorname{st}(\alpha_i) \operatorname{st}(e_i) + 0$, which is what we sought to show.

  \paragraph{Claim 2.} $\operatorname{st}(e_1), \operatorname{st}(e_2), \dotsc, \operatorname{st}(e_k)$ forms a linearly independent set.
  
  \emph{Proof of claim 2.} Assume that there exist $(\alpha_i)$ (where each $\alpha_i$ is real) such that $\sum_{i=1}^k \alpha_i \operatorname{st}(e_i) = 0$. We then have that $\operatorname{st}\left(\sum_{i=1}^k \alpha_i e_i \right) = 0$. If $\alpha_j \neq 0$ for some $j$ then we get that $\sum_{i=1}^k \epsilon \alpha_i e_i = 0$, which contradicts the linear independence of $e_1, e_2, \dotsc, e_n$. Therefore all $\alpha_i$ equal $0$.
\end{proof}

\section{Dual-number $*$/$T$-Singular Value Decomposition} \label{dual-svd}

These are the theorems we will prove in this section.

\begin{theorem}[Dual $*$-SVD]
  Given a square dual number matrix \(M \in M_n(\mathbb D)\), we can decompose the matrix as: \[M = U\Sigma V^*\] where \(U\) and \(V\) are unitary, and \(\Sigma\) is a block-diagonal matrix where each block is either of the form \(\begin{pmatrix}\sigma_i\end{pmatrix}\), \(\begin{pmatrix}\sigma_i & -\epsilon \sigma'_i \\ \epsilon\sigma_i' & \sigma_i\end{pmatrix}\) or \(\begin{pmatrix} \epsilon\sigma_i'\end{pmatrix}\),
  where each \(\sigma_i\) and \(\sigma_i'\) is real, and $\sigma'_i \neq 0$.
\end{theorem}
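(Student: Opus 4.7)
The plan is to build the decomposition $M = U\Sigma V^*$ by a chain of $*$-unitary reductions, starting from the real SVD of the standard part of $M$ and then cleaning up the infinitesimal perturbation in three further stages. Writing $M_0 = \operatorname{st}(M)$, I first take a real SVD $M_0 = U_0 \Sigma_0 V_0^T$ and group equal singular values so that $\Sigma_0 = \sigma_1 I_{n_1} \oplus \dotsb \oplus \sigma_r I_{n_r} \oplus 0\cdot I_{n_0}$, with the $\sigma_i$ pairwise distinct and positive and the zero eigenvalue isolated in the last block. After replacing $M$ by $U_0^T M V_0$, I may assume $M = \Sigma_0 + \epsilon N$ for some real matrix $N$, written in the conformal block form $(N_{ij})$.

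The second stage is the main one: I eliminate every off-diagonal infinitesimal block using an infinitesimal $*$-unitary perturbation $U = I + \epsilon A$, $V = I + \epsilon B$. The condition $U\overline{U}^T = I$ forces $A = A^T$ (and similarly $B = B^T$), and a short calculation gives $U^* M V = \Sigma_0 + \epsilon(N - A\Sigma_0 + \Sigma_0 B)$. For each pair $i \neq j$, the two off-diagonal blocks $N_{ij}$ and $N_{ji}$, coupled by $A_{ji} = A_{ij}^T$ and $B_{ji} = B_{ij}^T$, yield a linear system in $(A_{ij}, B_{ij})$ with determinant $\sigma_i^2 - \sigma_j^2$. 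Since the $\sigma_i$ are pairwise distinct nonnegative reals, this determinant is nonzero, so every off-diagonal block can be cleared, leaving a block-diagonal matrix.

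In the third stage I canonicalise each surviving diagonal block. For an appreciable block $\sigma_i I_{n_i} + \epsilon N_{ii}$ I apply the transformation $U_i = P + \epsilon SP$, $V_i = P + \epsilon TP$ with $P$ real orthogonal and $S, T$ real symmetric; these are precisely the $*$-unitary matrices with standard part $P$, and insisting on a common $P$ on both sides is forced because the result must again have scalar standard part $\sigma_i I$. A direct computation gives $U_i^*(\sigma_i I + \epsilon N_{ii})V_i = \sigma_i I + \epsilon P^T (N_{ii} + \sigma_i(T - S)) P$, so choosing $T - S$ to cancel the symmetric part of $N_{ii}/\sigma_i$ leaves only the skew-symmetric part, which the real spectral theorem for skew-symmetric matrices reduces via a suitable $P$ to a direct sum of $2\times 2$ skew blocks and possibly a zero; these produce the required $\begin{pmatrix}\sigma_i & -\epsilon\sigma'_i \\ \epsilon\sigma'_i & \sigma_i\end{pmatrix}$ and $(\sigma_i)$ pieces. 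For the zero diagonal block $\epsilon N_{00}$, no scalar-standard-part constraint applies, so I may use independent real orthogonals $P$ and $Q$ coming from the real SVD $N_{00} = PDQ^T$, obtaining the diagonal block $\epsilon D$ and hence the $(\epsilon\sigma'_i)$ entries.

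The main obstacle is the off-diagonal elimination in the second stage: it hinges on the determinant $\sigma_i^2 - \sigma_j^2$ being invertible as a real number, which is exactly why I insist on grouping equal singular values and isolating the zero eigenvalue before starting. Once this step succeeds, the third stage is essentially the real skew-spectral theorem applied blockwise on the appreciable side together with a real SVD on the zero block, so no further difficulty arises. Composing the $*$-unitary transformations from all three stages gives the required $U$, $V$, and the desired block-diagonal $\Sigma$.
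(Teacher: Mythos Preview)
Your argument is correct. The off-diagonal elimination in stage~2 checks out: with $A=A^T$ and $B=B^T$, the pair of block equations for $(i,j)$ and $(j,i)$ indeed reduces to a $2\times 2$ system in $(A_{ij},B_{ij})$ with determinant $\sigma_i^2-\sigma_j^2\neq 0$, and your stage~3 computations for both the appreciable and the zero blocks are accurate.

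Your route, however, differs from the paper's. The paper does not attack $M$ directly; it first proves the $*$-spectral theorem for Hermitian dual matrices (by a block-elimination argument very close in spirit to your stage~2, but one-sided), then derives the SVD from it: for invertible $M$ one applies the spectral theorem to $M^*M$, takes a square root, and sets $U=M(\sqrt{M^*M})^{-1}$; for general $M$ one uses the spectral basis of $T^*T$ to split the space into an ``appreciable'' part $V^L$ and an ``infinitesimal'' part $V^R$, reduces the first to the invertible case and the second to a real SVD of an infinitesimal map. What you do instead is transplant the block-elimination technique from the spectral proof straight onto the two-sided SVD problem, bypassing $M^*M$, square roots, and polar decomposition entirely. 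The paper's approach is more modular (the spectral theorem is reusable and the SVD step mirrors the classical proof), while yours is more self-contained and arguably more elementary, since it never needs to invoke an auxiliary spectral theorem or justify the existence of $\sqrt{M^*M}$.
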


\begin{theorem}[Dual $T$-SVD]
  Given a square dual number matrix \(M \in M_n(\mathbb D)\), we can decompose the matrix as: \[M = U\Sigma V^T\] where \(U\) and \(V\) are $T$-orthogonal, and \(\Sigma\) is a diagonal matrix.
\end{theorem}

We will start by proving these theorems for invertible matrices.

\begin{theorem}[SVD for invertible matrices]
  Every invertible matrix has a \(T\)/\(*\)-SVD.
\end{theorem}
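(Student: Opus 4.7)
The plan is to reduce to the $*$/$T$-spectral theorems just proved by applying them to the Hermitian (resp.\ symmetric) matrix $N := M^* M$ (resp.\ $N := M^T M$). Since $M$ is invertible, $\det M$ is a unit in $\mathbb D$, so $\det(\operatorname{st} M) = \operatorname{st}(\det M) \neq 0$; hence $\operatorname{st}(M)$ is invertible over $\mathbb R$, and $\operatorname{st}(N) = \operatorname{st}(M)^T \operatorname{st}(M)$ is a positive-definite real matrix.

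For the $*$-SVD, the $*$-spectral theorem yields $N = V \Sigma_0 V^*$ with $V$ unitary and $\Sigma_0$ block-diagonal whose blocks are $(\sigma_i)$ or $\begin{pmatrix}\sigma_i & -\epsilon \sigma_i' \\ \epsilon \sigma_i' & \sigma_i \end{pmatrix}$. The standard parts of the diagonal of $\Sigma_0$ are the eigenvalues of $\operatorname{st}(N)$, so every $\sigma_i > 0$. Define $\Sigma$ block-by-block by replacing each $(\sigma_i)$ with $(\sqrt{\sigma_i})$ and each $2 \times 2$ block with $\begin{pmatrix}\sqrt{\sigma_i} & -\epsilon \sigma_i'/(2\sqrt{\sigma_i}) \\ \epsilon \sigma_i'/(2\sqrt{\sigma_i}) & \sqrt{\sigma_i}\end{pmatrix}$. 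A direct multiplication shows $\Sigma^2 = \Sigma_0$; moreover $\Sigma$ is Hermitian and invertible. Set $U := M V \Sigma^{-1}$; then
\[ U^* U = \Sigma^{-1} V^* M^* M V \Sigma^{-1} = \Sigma^{-1} V^* (V \Sigma_0 V^*) V \Sigma^{-1} = \Sigma^{-1} \Sigma_0 \Sigma^{-1} = I, \]
and since $U$ is invertible this also gives $U U^* = I$. Rearranging $U = M V \Sigma^{-1}$ produces the desired factorisation $M = U \Sigma V^*$.

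The $T$-case is handled identically, with $M^T M$ in place of $M^* M$ and the $T$-spectral theorem in place of the $*$-spectral theorem. Now $\Sigma_0$ is fully diagonal with positive-standard-part entries, $\Sigma$ is its entrywise square root, and $U := M V \Sigma^{-1}$ satisfies $U^T U = I$ by the same calculation.

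The main obstacle I expect is justifying that the $\sigma_i$ are positive. This requires noting that the standard parts of the diagonal entries of the spectral decomposition of $N$ coincide with the eigenvalues of $\operatorname{st}(N)$ — something visible from the structure of the $*$/$T$-spectral proof, where the $\lambda_i$ are precisely obtained by diagonalising the standard part. A secondary but purely mechanical check is the $2 \times 2$ square-root identity, which reduces to verifying $2 \cdot \sqrt{\sigma_i} \cdot \sigma_i'/(2\sqrt{\sigma_i}) = \sigma_i'$ on the off-diagonal.
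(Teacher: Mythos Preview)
Your proof is correct and follows essentially the same route as the paper: apply the relevant spectral theorem to $M^*M$ (resp.\ $M^TM$), use positive-definiteness of the standard part to take a square root, and set $U = M V \Sigma^{-1}$. The only cosmetic difference is that the paper packages the square root as $\sqrt{M^TM} = V\sqrt{\Sigma}V^T$ and defines its $U$ as $M(\sqrt{M^TM})^{-1}$, which unwinds to your $U$ after right-multiplying by $V$; your version is in fact more explicit in the $*$-case, where you write down the $2\times 2$ block square root that the paper leaves implicit.
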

  
\begin{proof}
  We shall prove this for the \(T\)-SVD, but the argument is the same for the \(*\)-SVD.
  
  Let \(M\) be an arbitrary dual matrix. Observe that \(M^TM\) is a symmetric matrix. As such, by the \(T\)-spectral theorem, we have that \(M^TM = V\Sigma V^T\) for some orthogonal matrix \(V\) and diagonal \(\Sigma\). Also observe that the standard part of \(M^TM\) is positive-definite. From this, it follows that the standard part of \(\Sigma\) is positive. It follows that \(\sqrt{M^TM}\) exists and is equal to \(V \sqrt{\Sigma} V^T\). Observe also that \(M(\sqrt{M^T M})^{-1}\) is an orthogonal matrix which we shall call \(U\). Finally, we have that \((UV) \sqrt \Sigma V^T\) is the \(T\)-SVD of \(M\).
\end{proof}

\begin{theorem}[Dual $*$-SVD]
  Given a square dual number matrix \(M \in M_n(\mathbb D)\), we can decompose the matrix as: \[M = U\Sigma W^*\] where \(U\) and \(W\) are unitary, and \(\Sigma\) is a block-diagonal matrix where each block is either of the form \(\begin{pmatrix}\sigma_i\end{pmatrix}\), \(\begin{pmatrix}\sigma_i & -\epsilon \sigma'_i \\ \epsilon\sigma_i' & \sigma_i\end{pmatrix}\) or \(\begin{pmatrix} \epsilon\sigma_i'\end{pmatrix}\),
  and each \(\sigma_i\) and \(\sigma_i'\) is real.  
\end{theorem}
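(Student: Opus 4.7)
The plan is to use the ordinary real SVD to reduce the standard part of $M$ to a diagonal matrix, and then apply the same infinitesimal-conjugation technique used in the proof of the dual $*$-spectral theorem to bring the whole dual matrix into the desired block form. Write $M = M_0 + \epsilon M_1$ and apply the real SVD, $M_0 = U_0 \Sigma_0 V_0^T$, with $U_0, V_0 \in O(n)$ and $\Sigma_0 = \operatorname{diag}(\mu_1, \dotsc, \mu_n)$, $\mu_1 \ge \dotsb \ge \mu_n \ge 0$. Since real orthogonal matrices are unitary over $\mathbb{D}$, it suffices to find a $*$-SVD for $N := U_0^* M V_0 = \Sigma_0 + \epsilon B$ with $B$ real.

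I would next group the indices by equal $\mu_i$, so that the distinct standard singular values are $\nu_1 > \nu_2 > \dotsb > \nu_k \ge 0$, inducing a block decomposition of $N$. To kill the off-diagonal blocks I would conjugate by $P = I + \epsilon A$ on the left and $Q = I + \epsilon C$ on the right, where $A, C$ are real symmetric (so that $P$ and $Q$ are unitary). The new $(i,j)$ block becomes $\nu_i C_{ij} - \nu_j A_{ij} + B_{ij}$; combining this with the equation coming from the $(j,i)$ block via $A^T = A$, $C^T = C$ gives a $2 \times 2$ linear system in $A_{ij}, C_{ij}$ with determinant $\nu_j^2 - \nu_i^2$. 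This is nonzero whenever $\nu_i \ne \nu_j$, even if one of them vanishes, so all off-diagonal blocks can be killed, leaving $\bigoplus_i (\nu_i I + \epsilon \tilde B_i)$.

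I then treat each diagonal block separately. If $\nu_i > 0$, the remaining conjugation freedom by $(I + \epsilon X, I + \epsilon Y)$ with $X, Y$ real symmetric adds an arbitrary real symmetric matrix to $\tilde B_i$, so I can reduce to $\tilde B_i$ skew-symmetric; the real spectral theorem for skew-symmetric matrices then produces an orthogonal $R$ putting $R^T \tilde B_i R$ into direct sum of $2 \times 2$ skew blocks and zeros, matching the allowed forms $(\sigma_i)$ and $\bigl(\begin{smallmatrix}\sigma_i & -\epsilon \sigma_i' \\ \epsilon \sigma_i' & \sigma_i\end{smallmatrix}\bigr)$ with $\sigma_i = \nu_i$. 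If $\nu_i = 0$, the block is purely infinitesimal, $\epsilon \tilde B_i$, and the ordinary real SVD $\tilde B_i = R D R'^T$ gives $R(\epsilon D)R'^T$ with $\epsilon D$ diagonal and purely infinitesimal, i.e.\ of the allowed $(\epsilon \sigma_i')$ form. Composing all the unitary factors gives $M = U \Sigma W^*$ as required.

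The main obstacle is this $\nu_i = 0$ case, because it is exactly where the invertible-case proof breaks down: that proof uses $\sqrt{M^*M}$, but when $M$ is singular the $*$-spectral decomposition of $M^*M$ may contain an irreducible block of the form $\bigl(\begin{smallmatrix} 0 & -\epsilon \sigma' \\ \epsilon \sigma' & 0 \end{smallmatrix}\bigr)$ with $\sigma' \ne 0$, and a direct calculation shows no dual-number matrix squares to such a block. The plan above avoids square roots entirely by handling the vanishing singular values with an ordinary real SVD on the infinitesimal sub-block; everything else is the standard bookkeeping of composing the successive unitary factors.
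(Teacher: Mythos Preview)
Your argument is correct. The linear system for killing the off-diagonal blocks is exactly as you describe, the determinant $\nu_j^2-\nu_i^2$ is nonzero for distinct nonnegative $\nu_i,\nu_j$, and the subsequent reduction of each diagonal block (symmetric freedom to force skew-symmetry when $\nu_i>0$, real SVD when $\nu_i=0$) goes through without difficulty.

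The route, however, is genuinely different from the paper's. The paper works with $M^*M$: it applies the dual $*$-spectral theorem to $M^*M$, splits $\mathbb D^n$ into the span $V^L$ of eigenvectors with appreciable image under $M$ and the complementary span $V^R$, produces a unitary $U$ carrying $T(V^L)$ back onto $V^L$, and then invokes the already-proved invertible $*$-SVD on $UT|_{V^L}$ and a real SVD on the purely infinitesimal map $UT|_{V^R}$. In other words, the paper treats the spectral theorem and the invertible SVD as black boxes and reduces the general case to them via a geometric splitting. You instead work directly with $M$ rather than $M^*M$, unfold the infinitesimal-conjugation technique from the spectral proof into a two-sided version, and handle the zero-singular-value block in place. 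What your approach buys is a single self-contained computation that never needs $\sqrt{M^*M}$ (whose nonexistence at $\nu_i=0$ you correctly flag) and never needs the auxiliary unitary carrying $T(V^L)$ to $V^L$; what the paper's approach buys is modularity, reusing the spectral theorem and the invertible case verbatim rather than reproving an adapted form inline.
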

\begin{proof}
  Let $T: V \to V$ be a linear endomorphism over a finite-dimensional free $\mathbb D$-module.

  The operator $T^* T$ is Hermitian. Thus, apply the $*$-spectral theorem to it to get an orthonormal basis $B = \{b_1, b_2, \dotsc, b_n\}$. Let $V^L$ be the span of the set of vectors in $B$ that get mapped to appreciable vectors by $T$. Let $V^R$ be the span of the set of vectors in $B$ that get mapped to infinitesimal vectors by $T$.\footnote{It so happens that $V^L$ is equal to the image of $T^* T$, and that $V^R$ is equal to both $\ker(T^*T)$ and $\ker(T^tT)$ (where $T^t$ denotes the tranpose of $T$). These are easy to prove, and we won't do so here.}

  Observe that $T$ is injective over $V^L$. Let $I$ be the image of $V^L$ under $T$. Observe that $I$ has the same dimension as $V^L$. Thus, there exists a unitary operator $U$ such that $U(I) = V^L$. Clearly, $UT(V^L) = V^L$. Observe that $UT(V^R) \subseteq V^R$.

  Since $UT|_{V^L}$ is a linear endomorphism, we may take its $*$-SVD by theorem \emph{Dual $*$-SVD for invertible matrices}. We also observe that $UT|_{V^R}$ is an infinitesimal map (meaning that it maps every argument to infinitesimals), and can therefore be expressed as $\epsilon T'$. We can thus take the SVD of $T'$. Taking the direct sum of the $*$-SVDs of $UT|_{V^L}$ and $UT|_{V^R}$ yields the $*$-SVD of $UT$.

  The block types \(\begin{pmatrix}\sigma_i\end{pmatrix}\) and \(\begin{pmatrix}\sigma_i & -\epsilon \sigma'_i \\ \epsilon\sigma_i' & \sigma_i\end{pmatrix}\) come from $UT|_{V^L}$, and the block type \(\begin{pmatrix} \epsilon\sigma_i'\end{pmatrix}\) comes from $UT|_{V^R}$.

  Finally, we multiply the SVD of $UT$ on the left by $U^{-1}$, and we are done.
\end{proof}

\begin{lemma} The following claims made in the above proof are true:
  \begin{enumerate}
    \item The Gram-Schmidt process is a valid algorithm for producing an orthonormal basis for some free $\mathbb D$-module $W$ given a basis for $W$.
    \item $T$ is injective over $V^L$.
    \item There exists a unitary map $U$ such that $UT(V^L) = V^L$.
    \item $UT(V^L) \perp UT(V^R)$.
    \item $UT(V^R) \subseteq V^R$.
  \end{enumerate}
\end{lemma}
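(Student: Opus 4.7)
The plan is to address the five claims in an order reflecting their logical dependencies. Claim~1 is a standalone preliminary; claims~2 and 5 follow from the block structure of $T^*T$ furnished by the $*$-spectral theorem; claim~3 (the construction of $U$) builds on claim~1 together with some basis-extension machinery; and claim~4 is an immediate consequence of claims~3 and 5 combined with the observation that $V^L$ and $V^R$ are spans of disjoint subsets of an orthonormal basis and are therefore $*$-orthogonal.

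For claim~1, the first thing I would establish is that if $v_1, \dotsc, v_n$ is a $\mathbb D$-basis of a free module, then $\operatorname{st}(v_1), \dotsc, \operatorname{st}(v_n)$ are $\mathbb R$-linearly independent: any real relation $\sum a_i \operatorname{st}(v_i) = 0$ forces $\sum a_i v_i = \epsilon w$, and re-expanding $w = \sum b_j v_j$ in the basis yields the $\mathbb D$-linear relation $\sum (a_i - \epsilon \operatorname{st}(b_i)) v_i = 0$, which forces $a_i = 0$. Gram--Schmidt then runs exactly as over $\mathbb R$, because a direct computation gives $\langle u, u \rangle = \|\operatorname{st}(u)\|^2$ (the $\epsilon$-terms cancel), so the normalising factor at each step is appreciable---its standard part is the squared real norm of the corresponding real Gram--Schmidt vector, which is nonzero by induction.

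For claims~2 and 5, I would read off the matrix of $T^*T$ in the basis $B$ produced by the $*$-spectral theorem: it is block-diagonal with blocks $(\sigma_i)$ and $\begin{pmatrix}\sigma_i & -\epsilon\sigma'_i\\ \epsilon\sigma'_i & \sigma_i\end{pmatrix}$. A direct computation of $\langle Tb_j, Tb_i \rangle = \langle b_j, T^*T b_i\rangle$ shows that the Gram matrix of $\{Tb_i\}$ has the same block-diagonal shape, so $Tb_i$ is appreciable iff the ambient block has appreciable $\sigma_i$. Consequently $T^*T$ restricted to $V^L$ is block-diagonal with block determinants $\sigma_i$ or $\sigma_i^2$ (all appreciable), hence invertible; this yields claim~2, since $Tv = 0$ on $V^L$ implies $T^*Tv = 0$, hence $v = 0$. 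The same block analysis shows $T^*T(V^L) \subseteq V^L$, so for $v \in V^R$ and $u \in V^L$ one has $\langle Tv, Tu\rangle = \langle v, T^*T u\rangle = 0$ (using $V^L \perp V^R$); thus $T(V^R) \subseteq I^\perp$, and $U$ being unitary gives $UT(V^R) \subseteq U(I^\perp) = U(I)^\perp = (V^L)^\perp = V^R$, which is claim~5.

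For claim~3, I would apply Gram--Schmidt (claim~1) to $\{Tb_i : b_i \in V^L\}$, which is a basis of $I = T(V^L)$ whose standard parts are $\mathbb R$-linearly independent because the standard part of its Gram matrix is diagonal with appreciable entries $\sigma_i$, obtaining an orthonormal basis of $I$. I would then extend this to an orthonormal basis of $V$ by completing the standard parts to an $\mathbb R$-basis, lifting the extra vectors to $\mathbb D^n$ with zero infinitesimal part, and re-running Gram--Schmidt on the extension; $U$ is then defined as the unique map sending this orthonormal basis of $V$ to $B$, arranged so the first part lands on $\{b_i : b_i \in V^L\}$. The main obstacle I anticipate is this extension step---verifying that the orthogonal complement of a free submodule with appreciable Gram matrix is itself free of the expected rank, and that the resulting basis-to-basis map really is unitary in the sense $U \overline U^T = I$; all the other claims are essentially direct once the block calculation for $T^*T$ is in hand.
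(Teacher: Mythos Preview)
Your proposal is correct and follows essentially the same approach as the paper: Gram--Schmidt is validated by showing the normalising factor is appreciable, injectivity and the orthogonality relations come from the block structure of $T^*T$ in the spectral basis (via $\langle Tv, Tu\rangle = \langle v, T^*Tu\rangle$), and $U$ is built by Gram--Schmidt on $\{Tb_i\}$ followed by basis extension. The only cosmetic difference is that you derive claim~5 first and obtain claim~4 as a corollary, whereas the paper proves claim~4 directly and then deduces claim~5 from it---the underlying computation $\langle T(V^L), T(V^R)\rangle = \langle T^*T(V^L), V^R\rangle = \langle V^L, V^R\rangle = 0$ is identical in both.
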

\begin{proof} We proceed to prove each claim.

  \emph{Proof of claim 1.} The proof is by induction. Assume that $\dim(W) = 0$ and the basis is $\emptyset$. Then Gram-Schmidt finishes immediately and produces the orthonormal basis $\emptyset$. Now assume that given a set of vectors $\{v_1, v_2, \dotsc, v_n, v_{n+1}\}$, the first $n$ iterations of Gram-Schmidt succeed in producing an orthonormal set of vectors $\{e_1, e_2, \dotsc, e_n\}$ whose span is equal to the span of $\{v_1, v_2, \dotsc, v_n\}$. We perform another iteration and get $e_{n+1} = \frac{v_{n+1} - \sum_{i=1}^n \langle e_i, v_{n+1} \rangle e_i}{\left|v_{n+1} - \sum_{i=1}^n \langle e_i, v_{n+1} \rangle e_i\right|}$ where $|u|$ means $|\operatorname{st}(u)|$. We need to check that the division is sound. Assuming it isn't, we would have that $\left|v_{n+1} - \sum_{i=1}^n \langle e_i, v_{n+1} \rangle e_i\right| = 0$, which implies that $\epsilon\left(v_{n+1} - \sum_{i=1}^n \langle e_i, v_{n+1} \rangle e_i\right) = 0$, which contradicts the linear independence of $\{v_1, v_2, \dotsc, v_n, v_{n+1}\}$. The linear independence, spanning property and orthonormality of $\{e_1, e_2, \dotsc, e_n, e_{n+1}\}$ are easily shown.

  \emph{Proof of claim 2.} It's easily seen that $T^* T$ is injective over $V^L$ (based on the definition of $V^L$). So then assume that $T(u) = T(v)$ for some $u$ and $v$ in $V^L$. We then have that $T^*T(u) = T^*T(v)$. By injectivity of $T^* T$, we have that $u = v$.

  \emph{Proof of claim 3.} We begin by constructing an orthonormal basis for $T(V^L)$. Clearly, given an orthonormal basis $\{b_1, b_2, \dotsc, b_k\}$ of $V^L$, we get a basis $\{T(b_1), T(b_2), \dotsc, T(b_k)\}$ of $T(V^L)$. But this latter basis may not be orthonormal. We perform Gram-Schmidt on this latter basis to get an orthonormal basis $\{c_1, c_2, \dotsc, c_k\}$ of $T(V^L)$. We extend the orthonormal basis $\{b_1, b_2, \dotsc, b_k\}$ of $V^L$ to an orthonormal basis $\{b_1, b_2, \dotsc, b_k, b_{k+1}, \dotsc, b_n\}$ of the whole space, and the orthonormal basis $\{c_1, c_2, \dotsc, c_k\}$ of $T(V^L)$ to an orthonormal basis $\{c_1, c_2, \dotsc, c_k, c_{k+1}, \dotsc, c_n\}$ of the whole space. We leave it to the reader to verify that this extension is possible. We now define $U(c_i) = b_i$ for all $i$, and observe that this $U$ is unitary, and maps $T(V^L)$ to $V^L$.

  \emph{Proof of claim 4.} We take the inner product of the spaces $UT(V^L)$ and $UT(V^R)$. We get $\langle UT(V^L), UT(V^R) \rangle = \langle T(V^L), T(V^R) \rangle = \langle T^* T(V^L), V^R \rangle = \langle V^L, V^R \rangle = 0$. We've used the fact that $T^* T(V^L) = V^L$, which should be easy to verify.

  \emph{Proof of claim 5.} Let $\{b_1, b_2, \dotsc, b_k\}$ be basis of $V^L$, and $\{b_{k+1}, b_{k+2}, \dotsc, b_n\}$ be a basis of $V^R$. Their union is a basis for the whole space. Let $v \in UT(V^R)$. Express $v$ as $\sum_{i=1}^n \lambda_i b_i$. Since $UT(V^R) \perp V^L$, we have that for $i \leq k$, $\lambda_i = 0$. It follows that $v$ is in the span of $\{b_1, b_2, \dotsc, b_k\}$, and therefore that $v$ is in $V^R$.
\end{proof}

\begin{theorem}[Dual $T$-SVD]
  Given a square dual number matrix \(M \in M_n(\mathbb D)\), we can decompose the matrix as: \[M = U\Sigma V^T\] where \(U\) and \(V\) are $T$-orthogonal, and \(\Sigma\) is a diagonal matrix.
\end{theorem}  
\begin{proof}
  Essentially the same as for the $*$-SVD, except that the role of the $*$-spectral theorem is changed to the $T$-spectral theorem.
\end{proof}

\section{Acknowledgements}
I am thankful to Gregory Gutin and Ilya Spitkovsky for helpful comments and suggestions.
The software Sympy v1.6 (\cite{10.7717/peerj-cs.103}) was helpful for finding some of the proofs.

\bibliographystyle{plain}

\end{document}